\def\version{22 January 2016}
 
\documentclass[reqno]{amsart}
\usepackage{amsmath}
\usepackage{amsfonts}
\usepackage{amssymb}
\usepackage{amssymb,graphics,graphicx,bbm,hyperref,color}

\usepackage{multicol}
\usepackage{enumitem} 
\usepackage{dsfont}


\definecolor{gray}{rgb}{0.93,0.93,0.93}
\definecolor{light-gold}{rgb}{0.99,0.97,0.78}

\setlength{\oddsidemargin}{15mm}
\setlength{\evensidemargin}{15mm}
\setlength{\textwidth}{144mm}

\def\be{\begin{equation}}
\def\ee{\end{equation}}
\def\bm{\begin{multline}}
\def\bfig{\begin{figure}[htb]}
\def\efig{\end{figure}}

\newcommand{\abs}[1]{{\lvert #1\rvert}}

\numberwithin{equation}{section}
\newtheorem{theorem}{Theorem}[section]

\newtheorem{lemma}[theorem]{Lemma}
\newtheorem{corollary}[theorem]{Corollary}

\theoremstyle{remark}



\newcommand{\caC}{{\mathcal C}}

\newcommand{\bbE}{{\mathbb E}}
\newcommand{\bbN}{{\mathbb N}}
\newcommand{\bbP}{{\mathbb P}}

\newcommand{\bbZ}{{\mathbb Z}}
\newcommand{\bse}{{\boldsymbol e}}

\newcommand{\eps}{{\varepsilon}}

\newcommand{\co}{{\rm c}}
\newcommand{\1}{{\mathds1}}

\makeatletter
  \def\tagform@#1{\maketag@@@{\footnotesize{(#1)}\@@italiccorr}}
\makeatother

\renewcommand{\eqref}[1]{(\ref{#1})}

\theoremstyle{plain}

\global\long\def\cbr#1{\left\{  #1\right\}  }
\global\long\def\rbr#1{\left(#1\right)}


\begin{document}

\phantom{{\hfill\small \version} \vspace{2mm}}

\title{The random interchange process on the hypercube}

\author{Roman Koteck\'y}
\address{Department of Mathematics, University of Warwick,
Coventry, CV4 7AL, United Kingdom, and Centre for Theoretical Study, Charles University, Prague, Czech Republic}
\email{R.Kotecky@warwick.ac.uk}

\author{Piotr Mi\l{}o\'{s}}
\address{Faculty of Mathematics, Informatics and Mechanics, University of Warsaw, Banacha 2,
02-097 Warszawa, Poland}
\email{pmilos@mimuw.edu.pl}

\author{Daniel Ueltschi}
\address{Department of Mathematics, University of Warwick,
Coventry, CV4 7AL, United Kingdom}
\email{daniel@ueltschi.org}

\subjclass{60C05, 60K35, 82B20, 82B31}

\keywords{Random interchange, random stirring, long cycles, quantum Heisenberg model}

\begin{abstract}
We prove the occurrence of a phase transition accompanied by the emergence of
cycles of diverging lengths in the random interchange process on the hypercube. 
\end{abstract}

\thanks{\copyright{} 2016 by the authors. This paper may be reproduced, in its entirety, for non-commercial purposes.}

\maketitle

\section{Introduction}
\label{sec intro}

The interchange process is defined on a finite graph. With any edge is associated the transposition of its endvertices. The outcomes of the interchange process consist of sequences of random transpositions and the main questions of interest deal with the cycle structure of the random permutation that is obtained as the composition of these transpositions. As the number of random transpositions increases, a phase transition may occur that is 
indicated by the emergence of cycles of diverging lengths involving a positive density of vertices.

The most relevant graphs are regular graphs with an underlying ``geometric structure'' like
a finite cubic box in $\bbZ^{d}$ with edges between nearest neighbours. 
But the problem of proving the emergence of long cycles is out of reach for now and recent studies have been devoted to simpler graphs such as trees \cite{Ang,Ham} and complete graphs \cite{Sch,Ber,BK}. (Note also the intriguing identities of Alon and Kozma based on the group structure of permutations \cite{AK}.) The motivation for the present article is to move away from the complete graph towards $\bbZ^{d}$. 
 We consider the hypercube $\{0,1\}^{n}$ in the large $n$ limit and  establish the occurrence of a phase transition demonstrated by the emergence of cycles larger than $2^{(\frac12-\varepsilon) n}$. Our proof combines the recent method of Berestycki \cite{Ber}, which was used for the complete graph but is valid more generally, with an estimate of the rate of splittings that involves the isoperimetric inequality for hypercubes.

Besides its interest in probability theory, the random interchange process appears in studies of quantum spin systems \cite{Toth}, see also \cite{GUW} for a review. Cycle lengths and cycle correlations give information on the magnetic properties of the spin systems. The setting is a bit different, though. First, the number of transpositions is not a fixed parameter, but a Poisson random variable. Second, there is an additional weight of the form $\theta^{\# \text{cycles}}$ with $\theta=2$ in the case of the spin $\frac12$ quantum Heisenberg model. The first feature is not a serious obstacle, but the second feature turns out to be delicate. Notice that Bj\"ornberg recently obtained results about the occurrence of macroscopic cycles on the complete graph in the case $\theta>1$ \cite{Bjo,Bjo2}.
Correspondence between random transpositions and quantum models when $\theta=3,4,...$, and with a more general class of random loop models, can be found in \cite{AN,Uel}. We expect that our hypercube results can be extended to these situations as well, but it may turn out not to be entirely straightforward.

\section{Setting and results}

Let $G_n=(Q_{n},E_{n})$ be  a graph whose $N = 2^{n}$ vertices form a \emph{hypercube}  $Q_{n} = \{0,1\}^{n}$ with edges joining nearest-neighbours---pairs
of vertices that differ in exactly one coordinate,
$E_n=\{\{x,y\}: x,y\in Q_{n}, \abs{x-y}_1=1 \}$, $\abs{E_{n}}=\frac{Nn}2$.

Let $\Omega_n$ be the set of infinite sequences of edges in $E_n$. For $t\in \mathbb{N}$ by $\mathcal{F}_{n,t}$ we denote the $\sigma$-algebra generated by the first $t$ elements of the sequence. Further, for $t\in \mathbb{N}$ {we use $\Omega_{n,t}$ to denote} the set of sequences of $t$ edges $\bse = (e_{1},\dots,e_{t})$, where $e_{s} \in E_{n}$ for all $s=1,\dots,t$. The $\sigma$-algebra $\mathcal{F}_{n,t}$ will be identified with the total $\sigma$-algebra over $\Omega_{n,t}$. For an event $A\in \mathcal{F}_{n,t}$ we set
\[
	\bbP_{n}(A) = |A|\bigl(\tfrac2{Nn}\bigr)^t,
\]
i.e. edges are chosen independently and uniformly from $E_n$. 

Using $\tau_{e}$ to denote the transposition of the two endvertices of an edge $e \in E_{n}$,  we can view the sequence $\bse \in \Omega_{n,t}$ as a series of \emph{random interchanges}  generating a \emph{random permutation} $\sigma_t = \tau_{e_{t}}\circ  \tau_{e_{t-1}} \circ \cdots \circ \tau_{e_{1}}$ on $Q_{n}$. For any $\ell \in \bbN$, let $V_{t}(\ell)$ be the random set of vertices that belong to permutation cycles of lengths greater than $\ell$ in  $\sigma_t$. 

We start with the straightforward observation that only small cycles occur in $\sigma_t$ when $t$ is small.
It is based on the fact that the random interchange model possesses a natural percolation structure when viewing any edge contained in $\bse$ as opened.
The probability that a particular edge remains closed by the time $t$ is $\bigl(1-\frac{1}{Nn/2}\bigr)^t$.
Since the set of vertices of any cycle must be contained in a single percolation cluster, only small cycles occur when percolation clusters are small.

\begin{theorem}
\label{T:subcrit}
Let $c<1/2$ and $\epsilon>0$. Then there exists $n_0$ such that 
\[
\bbP_{n}(|V_{t}(\kappa n)| = 0) > 1 - \epsilon  \kappa^{-3/2}
\]
for all $t \le c N$, all $\kappa\ge \frac{2\ln 2}{(1-2c)^2}$, and all $n>n_{0}$.
\end{theorem}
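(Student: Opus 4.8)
The plan is to control permutation cycles by the percolation clusters alluded to before the statement. Declare an edge \emph{open} if it occurs in $\bse=(e_1,\dots,e_t)$. Following the trajectory $x=x^{(0)}$, $x^{(s)}=\tau_{e_s}(x^{(s-1)})$, one sees that a vertex only ever moves along open edges, so the cycle of $\sigma_t=\tau_{e_t}\circ\cdots\circ\tau_{e_1}$ through any vertex $x$ is contained in its open cluster $C(x)$. Hence $\{|V_t(\kappa n)|>0\}\subseteq\{\exists x:\,|C(x)|>\kappa n\}$, and by the union bound together with the vertex-transitivity of $Q_n$,
\[
	\bbP_n\bigl(|V_t(\kappa n)|>0\bigr)\ \le\ 2^n\,\bbP_n\bigl(|C(0)|>\kappa n\bigr).
\]

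Next I would bound the cluster-size tail by a subcritical branching process, exploring $C(0)$ one vertex at a time. Conditionally on the part already revealed, an as-yet-unexamined edge is open with probability at most $\frac{t-a}{|E_n|-a-b}$, where $a$ and $b$ count the open and closed edges inspected so far; this uses that at most $t$ edges are open in total and that $E_n$ is symmetric under permutations fixing the inspected edges. Since a cluster with $j$ vertices has $j\le t+1$, and during its exploration $a\ge j-1$ and $a+b\le jn$, a one-line optimisation over $j$ shows this conditional probability is at most $p':=\frac{2c}{n}\bigl(1+O(N^{-1})\bigr)$, \emph{uniformly} in the progress of the exploration. Thus the number of newly discovered vertices at each step is stochastically dominated by a $\mathrm{Binomial}(n,p')$ variable, and $|C(0)|$ is dominated by the total progeny $Z$ of a Galton--Watson tree with $\mathrm{Binomial}(n,p')$ offspring of mean $\mu:=np'=2c(1+O(N^{-1}))<1$ for $n$ large. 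The walk representation $\{Z>m\}\subseteq\{\sum_{i=1}^m X_i\ge m\}$ (the $X_i$ i.i.d.\ offspring) plus a Chernoff bound give $\bbP(Z>m)\le\exp\bigl(m(1-\mu+\ln\mu)\bigr)$; since $\nu\mapsto 1-\nu+\ln\nu$ is increasing on $(0,1)$ and, by Taylor, $-(1-\nu+\ln\nu)=\sum_{k\ge 2}\tfrac{(1-\nu)^k}{k}$, for $n$ large
\[
	\bbP_n\bigl(|C(0)|>m\bigr)\ \le\ e^{-mJ},\qquad J:=-(1-\mu+\ln\mu)\ >\ \tfrac12(1-2c)^2 ,
\]
the \emph{strict} inequality being crucial and valid because $\mu=2c(1+O(N^{-1}))$ and $-(1-2c+\ln 2c)>\tfrac12(1-2c)^2$.

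Combining the two displays gives $\bbP_n(|V_t(\kappa n)|>0)\le\exp\bigl(n(\ln 2-\kappa J)\bigr)$. For $\kappa\ge\kappa_0:=\frac{2\ln 2}{(1-2c)^2}$ one has $\kappa J\ge\kappa_0 J>\kappa_0\cdot\tfrac12(1-2c)^2=\ln 2$, so $\gamma:=\kappa_0 J-\ln 2>0$ and $\ln 2-\kappa J\le-\gamma$. Splitting $n=1+(n-1)$ and bounding $e^{\ln 2-\kappa J}=2e^{-\kappa J}\le 2M\kappa^{-3/2}$ with $M:=\sup_{\kappa\ge\kappa_0}\kappa^{3/2}e^{-\kappa J}<\infty$ yields
\[
	\bbP_n\bigl(|V_t(\kappa n)|>0\bigr)\ \le\ 2M\,\kappa^{-3/2}\,e^{-(n-1)\gamma},
\]
so taking $n_0=n_0(c,\epsilon)$ with $2Me^{-(n_0-1)\gamma}<\epsilon$ (and large enough for the estimates above on $\mu$ and $J$ to hold) completes the argument.

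This is the easy direction of the phase transition and I do not expect a genuine obstacle. The two points that need care are that the edge states are \emph{not} independent---handled above by tracking the shrinking ``budget'' $t-a$ of remaining open edges, which keeps the conditional open probability at $\tfrac{2c}{n}(1+o(1))$ throughout the exploration---and the quantitative matching of the threshold $\kappa_0$, which relies on the strict Taylor estimate $J(c)>\tfrac12(1-2c)^2$. The exponent $3/2$ is not special: any fixed power is delivered for free by the exponential decay $e^{-\kappa J}$, and is presumably chosen to make $\sum_\kappa\kappa^{-3/2}$ summable for later use.
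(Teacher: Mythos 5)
Your argument is correct, but it takes a genuinely different route from the paper's. The paper's proof is essentially a reduction to the literature: after the same observation that permutation cycles are confined to percolation clusters (with $t\le cN$ corresponding to edge density $p\sim 2c/n$), it invokes Ajtai--Koml\'os--Szemer\'edi and, quantitatively, Theorem 9 of Bollob\'as--Kohayakawa--\L{}uczak, whose proof is reinterpreted as the first-moment bound \eqref{E:EVlarge}, $\bbE_{n}(|V_{t}(\kappa n)|)\le\epsilon(n)\kappa^{-3/2}$ with $\epsilon(n)\to0$, and concludes by Markov's inequality; there the factor $\kappa^{-3/2}$ is a genuine feature of the subcritical cluster-size distribution rather than a free choice, and the expectation form of the estimate is reused later in the proof of Lemma \ref{L:N-Nbound}. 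You instead give a self-contained argument: union bound over vertices plus stochastic domination of the cluster exploration by a subcritical Galton--Watson process, with the exponential tail $\e{-\kappa n J}$ beating the entropy $2^{n}$ precisely because $J=-(1-2c+\ln 2c)>\tfrac12(1-2c)^2$, which transparently explains the constant $\frac{2\ln 2}{(1-2c)^2}$ and even covers the boundary case $\kappa=\frac{2\ln2}{(1-2c)^2}$ (the paper's quoted bound is stated for strict inequality). Your computations check out: the optimisation over $j$ using $a\ge j-1$, $a+b\le jn$ and $c<\tfrac12$ does give the uniform bound $\tfrac{2c}{n}(1+O(N^{-1}))$, and the Chernoff/total-progeny step is standard. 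The one step you should spell out is the conditional bound $\frac{t-a}{|E_n|-a-b}$: since edges are drawn with replacement, the clean justification is to condition on the number $k\le t$ of distinct edges appearing in $\bse$, given which the open set is a uniform $k$-subset of $E_n$ and the conditional probability of an unexamined edge being open is exactly the hypergeometric $\frac{k-a}{|E_n|-a-b}\le\frac{t-a}{|E_n|-a-b}$; your appeal to symmetry is this argument in compressed form. In short, your approach buys elementarity and an explicit rate (and could be summed over vertices to recover an expectation bound), while the paper's buys brevity and the sharper $\kappa^{-3/2}$ tail it needs again later.
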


\begin{proof}
In view of the above mentioned percolation interpretation of the random interchange model, the claim follows from the fact that  
the percolation model on the hypercube graph $Q_n$ is subcritical for $p=2c/n$ with $c<1/2$ and  the size of the largest cluster is of the order $n$ (see \cite{AKS}). The value $p=2c/n$ corresponds to  $t= c N$ implying that the probability of any particular edge to be open is
$1-\bigl(1-\frac{2}{Nn}\bigr)^{c N}\sim \frac{2c}n$. The claim of the theorem follows from \cite[Theorem 9]{BKL}. In particular, the last displayed inequality in its proof can be reinterpreted  as a claim that
\be
\label{E:EVlarge}
\bbE_{n}(|V_{t}(\kappa n)|) \leq \epsilon(n) \kappa^{-3/2}
\ee
with $\epsilon(n)\to 0$ as $n\to\infty$  whenever  $\kappa>\frac{2\ln 2}{(1-2c)^2}$.
\end{proof}

Our main result addresses the emergence of long cycles for 
large times, $t>N/2$. We expect that cycles of order $N$ occur for all large times; here we prove 
a weaker claim:  cycles larger than $N^{\frac12-\varepsilon}$ occur for a ``majority of large times''.

\begin{theorem}
\label{T:supercrit}
Let $c>\frac12$ and let $(\Delta_{n})$ be a sequence of positive numbers such that $\Delta_{n} n/\log n \to \infty$ as $n \to \infty$. Then there exist $\eta(c)>0$ and $n_{0}$ such that for all $n>n_{0}$, all $T> c N$, and all $a>0$, we have
\[
 \frac1{\Delta_{n} T} \sum_{t =T+1}^{\lfloor (1+\Delta_{n}) T \rfloor}    \bbE_{n} \Bigl( \frac{|V_{t}(N^{a})|}{N} \Bigr) \ge \eta(c) - a.
\]
For $c>1$, we can take $\eta(c) = \tfrac12 (1 - \frac1c)$.
\end{theorem}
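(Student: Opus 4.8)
The plan is to reduce the statement to a lower bound on the time average of a single functional of the cycle structure, the ``cycle-length entropy''
\[
  W_t \;=\; \sum_{x\in Q_n}\log_2 L_t(x) \;=\; \sum_i \ell_i(t)\log_2\ell_i(t),
\]
where $\ell_i(t)$ are the lengths of the cycles of $\sigma_t$ and $L_t(x)$ is the length of the cycle containing $x$. Splitting $\sum_x\log_2 L_t(x)$ according to whether $L_t(x)\le N^a$ (so $\log_2 L_t(x)\le an$) or $L_t(x)>N^a$ (so $\log_2 L_t(x)\le n$) yields the deterministic inequality
\[
  \frac{|V_t(N^a)|}{N}\;\ge\;\frac{W_t}{Nn}-a ,
\]
so it is enough to show that the time average of $\bbE_n[W_t]/(Nn)$ over $t\in\{T+1,\dots,\lfloor(1+\Delta_n)T\rfloor\}$ is at least $\eta(c)$. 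The logarithmic scale of $W_t$ is exactly what turns the threshold $N^a$ into the additive correction $a$ rather than a negligible one.

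Next, following the strategy of Berestycki — which, as used for the complete graph, is arranged so as to reduce the emergence of long cycles to a bound on the rate of fragmentation and is otherwise insensitive to the underlying graph — I would analyse the increments of $W_t$. Applying $\tau_{e_{t+1}}$ is a \emph{coagulation} if $e_{t+1}$ joins two distinct cycles of $\sigma_t$ (this increases $W$) and a \emph{fragmentation} if $e_{t+1}$ is internal to a cycle (this decreases $W$); a fragmentation splitting a cycle of length $\ell$ into arcs of lengths $j$ and $\ell-j$ costs $\ell\, h(j/\ell)\le\ell$, where $h$ is the binary entropy. The hypercube enters only through the edge-isoperimetric inequality $e(S)\le\tfrac12|S|\log_2|S|$, with $e(S)$ the number of edges having both endpoints in $S\subseteq Q_n$: it bounds the expected one-step loss of $W$ to fragmentation by $\tfrac1{Nn}\sum_i\ell_i^2\log_2\ell_i$ — small as long as no cycle is too long — and, via $|\partial S|=|S|n-2e(S)$, it also gives the lower bound $|\partial C_i|\ge\ell_i(n-\log_2\ell_i)$ on the edge boundary of a cycle, which the coagulation part of the argument needs, since on $Q_n$, unlike $K_N$, one cannot read off the number of edges between two cycles from their sizes. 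With fragmentation thus controlled, the coagulation drift pushes $W_t$ up past criticality in the same way the second moment of the cycle lengths blows up on $K_N$.

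Putting the pieces together gives a drift inequality of the schematic form
\[
  \bbE_n\bigl[W_{t+1}-W_t\mid\mathcal{F}_{n,t}\bigr]\;\ge\;(\text{coagulation drift})-\frac1{Nn}\sum_i\ell_i^2\log_2\ell_i ,
\]
and splitting the last sum by whether $\ell_i\le N^a$ bounds it by $aN^a+|V_t(N^a)|$ up to constants. Summing over the window, using $0\le W_t\le Nn$ to control the telescoping sum, and using that by time $T>cN$ the coagulation drift has already brought $\bbE_n[W_T]$ up to at least $\eta(c)Nn$, one shows that the time average of $\bbE_n[W_t]/(Nn)$ stays at least $\eta(c)$: a putative deficit would have to be absorbed by fragmentation, i.e.\ by long cycles, which is self-defeating. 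The hypothesis $\Delta_n n/\log n\to\infty$ makes the averaging interval long enough for this balancing to close, which is exactly why the conclusion is an average over $(T,(1+\Delta_n)T]$ rather than a pointwise bound. For $c>1$ the value $\eta(c)=\tfrac12(1-\tfrac1c)$ falls out by tracking constants through the percolation picture $p\approx 2c/n$: the giant component then carries a fraction at least $1-\tfrac1{2c}$ of the vertices, and the isoperimetric fragmentation bound can be shown to leave at least half of that fraction in cycles longer than $N^a$.

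I expect the main obstacle to be the lower bound on the coagulation drift on the hypercube. On $K_N$ the number of edges between cycles $C_i$ and $C_j$ is exactly $\ell_i\ell_j$, so the drift is an explicit function of the cycle-length moments; on $Q_n$ it lies anywhere in $[0,n\min(\ell_i,\ell_j)]$ and can vanish even for two very long cycles, so bounding $\sum_{i<j}|E_n(C_i,C_j)|\cdot(\text{gain from merging }C_i\text{ and }C_j)$ from below genuinely requires the large-boundary form of the isoperimetric inequality together with an averaging over the unknown geometric placement of the cycles in $Q_n$ — and pushing this through with the quantitative constant $\eta(c)$, rather than merely some positive constant, is the delicate point. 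A secondary difficulty is that the fragmentation estimate involves the very quantity $|V_t(N^a)|$ one is trying to bound, making the final inequality self-referential and forcing the time-averaged formulation. Both reflect that the edge-isoperimetric inequality on $Q_n$ is much weaker than on $K_N$, which is also why the argument reaches only cycles of length $N^{1/2-\varepsilon}$: once a cycle is much longer than $\sqrt N$ it already spans a macroscopic fraction of its isoperimetric quota of internal hypercube edges, so it fragments faster than the coagulation drift can rebuild it.
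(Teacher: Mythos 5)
Your reduction step is sound and is in fact the same mechanism as the paper's: the deterministic bound $|V_t(N^a)|/N \ge W_t/(Nn) - a$ is, after taking expectations, equivalent to Lemma \ref{L:main} (there phrased as: the isoperimetric inequality bounds the splitting probability $\bbP_n(S_t)$ by $\frac1{Nn}\bbE_n\bigl(\sum_C |C|\log|C|\bigr)$, i.e.\ by $\bbE_n[W_{t-1}]/(Nn)$, and the sum is then split at $N^a$). The gap is in the other half. Your plan requires a \emph{lower} bound on the coagulation drift of $W_t$, and at the decisive moment you posit that ``by time $T>cN$ the coagulation drift has already brought $\bbE_n[W_T]$ up to at least $\eta(c)Nn$''---but, via your own reduction, $\bbE_n[W_T]\ge\eta(c)Nn$ is precisely the conclusion of the theorem (in an even stronger, pointwise-in-$t$ form), so the argument is circular exactly where the work has to be done. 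Moreover the repair you sketch---lower-bounding $\sum_{i<j}|E_n(C_i,C_j)|\cdot(\text{gain})$ from $|\partial C_i|\ge\ell_i(n-\log_2\ell_i)$ plus an averaging over geometric placement---does not go through: the boundary edges of a long cycle may all run to fixed points or very short cycles, and each such merge gains only $O(\log\ell_i)=O(n)$ in $W$ while a single fragmentation can cost order $\ell_i$, so the vertex-boundary inequality alone yields no useful lower bound on the drift. (The paper explicitly remarks that it never uses that boundary inequality.)

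The paper's proof sidesteps any lower bound on coagulation gains. It never pushes $W_t$ upward; instead it lower-bounds the time-averaged split probability by bookkeeping: $N_t-\widetilde N_t=\sum_{i\le t}({\mathds1}_{S_i}-{\mathds1}_{M_i})$ (cycles vs.\ percolation clusters), so over a window $\sum_t{\mathds1}_{S_t}=\tfrac12(\text{telescoping difference})+\tfrac12\sum_t{\mathds1}_{I_t}$ (Lemma \ref{L:many splits}); the discrepancy $\bbE_n(N_T-\widetilde N_T)$ is $O(T\log n/n)$ because, by the same isoperimetric inequality, splits producing cycles of length at most $2n$ have probability $O(\log n/n)$ (Lemmas \ref{L:splitprob} and \ref{L:N-Nbound}); and $\bbP_n(I_t)$ is bounded below because at most $N$ cluster mergers can ever occur, whence $\bbP_n(\widetilde M_t)\lesssim N/t<1/c$ for $t>cN$ via an Azuma argument (Lemma \ref{lem slow merge}), with the Ajtai--Koml\'os--Szemer\'edi giant-component input covering $\tfrac12<c\le1$ (Lemma \ref{lem AKS}). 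This is also where $\eta(c)=\tfrac12(1-\tfrac1c)$ comes from---half of the within-cluster probability $1-N/t$---not from the giant component carrying a $1-\tfrac1{2c}$ fraction of vertices as you suggest; and the hypothesis $\Delta_n n/\log n\to\infty$ is needed precisely so that the accumulated discrepancy $T\log(4n)/n$, divided by the window length $\Delta_n T$, vanishes. In short, only the upper-bound direction of the isoperimetric inequality is ever needed; the lower bound on splittings comes from the split/merge/cluster-count identity, which is the idea missing from your proposal.
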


Let us observe that the highest achievable value of the exponent $a$ is just below $1/2$; this can be accomplished only with $c$ becoming large. But we expect that the size of the long cycles is of order $N$. In fact, one can formulate a precise conjecture, namely that the joint distribution of the lengths of long cycles is Poisson-Dirichlet. This was proved in the complete graph \cite{Sch}, and advocated in $\bbZ^{d}$ with $d\geq3$ \cite{GUW}.

The proof of Theorem \ref{T:supercrit} can be found in Section \ref{sec proof thm}; it is based on a series of lemmas obtained in the next section.

We can choose $\Delta_{n} \equiv \Delta > 0$, rather than a sequence that tends to 0. In this case, Theorem \ref{T:supercrit} takes a simpler form, which perhaps expresses the statement `long cycles are likely' more directly.

\begin{corollary}
	Let $a\in(0,1/2)$, $\Delta>0$, and $\epsilon_1 \in (0,\frac12-a)$. Then there exists $c> 1$ and $\epsilon_2 >0$ such that for $n$ large enough we have
	\[
		\frac{1}{\Delta T}  \sum_{t =T+1}^{\lfloor (1+\Delta) T \rfloor}  \bbP_{n}\Bigl(\frac{|V_t(N^a)|}{N}\geq \epsilon_1 \Bigr) \geq  \epsilon_2
	\]
	for all $T> c N$.
\end{corollary}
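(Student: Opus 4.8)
The plan is to derive the Corollary from Theorem~\ref{T:supercrit} by a straightforward averaging argument. Fix $a\in(0,1/2)$, $\Delta>0$, and $\epsilon_1\in(0,\tfrac12-a)$. Since $c\mapsto\eta(c)$ can be made arbitrarily close to $\tfrac12$ by taking $c$ large (indeed $\eta(c)=\tfrac12(1-\tfrac1c)\to\tfrac12$), we may choose $c>1$ so large that $\eta(c)-a > \epsilon_1$; write $\delta := \eta(c)-a-\epsilon_1 > 0$. Applying Theorem~\ref{T:supercrit} with $\Delta_n\equiv\Delta$ (permissible since $\Delta n/\log n\to\infty$), we obtain, for $n$ large and all $T>cN$,
\[
\frac{1}{\Delta T}\sum_{t=T+1}^{\lfloor(1+\Delta)T\rfloor}\bbE_n\Bigl(\frac{|V_t(N^a)|}{N}\Bigr)\ \ge\ \eta(c)-a\ =\ \epsilon_1+\delta .
\]

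The key step is then a Markov-type (first-moment) bound applied term by term. Since $0\le |V_t(N^a)|/N\le 1$ always, for each $t$ we have
\[
\bbE_n\Bigl(\tfrac{|V_t(N^a)|}{N}\Bigr)\ \le\ \epsilon_1+\bbP_n\Bigl(\tfrac{|V_t(N^a)|}{N}\ge\epsilon_1\Bigr).
\]
Averaging this inequality over $t\in\{T+1,\dots,\lfloor(1+\Delta)T\rfloor\}$ and combining with the displayed lower bound yields
\[
\frac{1}{\Delta T}\sum_{t=T+1}^{\lfloor(1+\Delta)T\rfloor}\bbP_n\Bigl(\tfrac{|V_t(N^a)|}{N}\ge\epsilon_1\Bigr)\ \ge\ \delta ,
\]
so the claim holds with $\epsilon_2:=\delta=\eta(c)-a-\epsilon_1>0$. (One should note that $\lfloor(1+\Delta)T\rfloor-T$ and $\Delta T$ agree up to a $O(1)$ additive error, which is absorbed for $n$, hence $T>cN$, large; alternatively one simply uses the normalization $\frac{1}{\Delta T}$ exactly as it appears in Theorem~\ref{T:supercrit}, so no adjustment is needed.)

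There is essentially no obstacle here: the Corollary is a soft consequence of the theorem, the only genuine input being the quantitative form $\eta(c)=\tfrac12(1-\tfrac1c)$ for $c>1$, which guarantees that $\eta(c)$ exceeds $a+\epsilon_1$ once $c$ is chosen large enough (explicitly, any $c>\bigl(1-2(a+\epsilon_1)\bigr)^{-1}$ works, and this is $>1$ because $a+\epsilon_1<\tfrac12$). The constraint $\epsilon_1<\tfrac12-a$ is exactly what makes such a $c$ available, and $\Delta$ may be kept fixed because Theorem~\ref{T:supercrit} already allows the constant sequence $\Delta_n\equiv\Delta$.
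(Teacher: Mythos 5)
Your proposal is correct and is essentially the paper's own argument: both deduce the Corollary from Theorem \ref{T:supercrit} with $\Delta_n\equiv\Delta$, choosing $c$ large enough that $\eta(c)=\tfrac12(1-\tfrac1c)>a+\epsilon_1$, and then applying a one-line first-moment (Markov-type) bound to the $[0,1]$-valued variable $|V_t(N^a)|/N$. The only cosmetic difference is that the paper applies Markov to $1-|V_t(N^a)|/N$, yielding $\epsilon_2=\frac{\eta(c)-a-\epsilon_1}{1-\epsilon_1}$, while you use $\bbE_n(X)\le\epsilon_1+\bbP_n(X\ge\epsilon_1)$, yielding $\epsilon_2=\eta(c)-a-\epsilon_1$; both are equally valid.
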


\begin{proof}
This follows from Theorem \ref{T:supercrit} and Markov's inequality. Namely,
\be
\begin{split}
\bbP_{n}\Bigl(\frac{|V_t(N^a)|}{N}\geq \epsilon_1 \Bigr) &= 1 - \bbP_{n}\Bigl(1-\frac{|V_t(N^a)|}{N}\geq 1-\epsilon_1 \Bigr) \\
&\geq 1 - \frac1{1-\epsilon_{1}} \bbE_{n}\Bigl(1-\frac{|V_t(N^a)|}{N}\Bigr) \\
&\geq 1 - \frac{1 - \eta(c)+ a}{1-\epsilon_{1}}.
\end{split}
\ee
This is positive for $\epsilon_{1} < \eta(c)-a$.
\end{proof}

\section{Occurrence of long cycles}
\label{sec proofs}

\subsection{Number of cycles vs number of clusters}
\label{sec Ber}

Cycle structure and percolation properties are intimately related, and we will rely on Berestycki's key observation that the number of cycles remains close to the number of clusters \cite{Ber}.
Let $N_{t}$ denote the random variable for the number of cycles of the random permutation $\sigma_t$ at time $t$, and $\widetilde N_{t}$ the number of clusters
of the underlying percolation model. Notice that $N_{t} \geq \widetilde N_{t}$. 

Let us consider the possible outcomes when a new random transposition arrives at time $t$. 
 There are three possibilities;
the endpoints of a new edge $e_t$ are either both in the same cycle of $\sigma_{t-1}$ (and thus also in the same cluster), or in the same cluster but in different cycles, or in different clusters.
Correspondingly, we are distinguishing three events:
\begin{itemize}
\item $S_{t}$,  a splitting of a cycle  where $N_{t} = N_{t-1}+1$ and $\widetilde N_{t} = \widetilde N_{t-1}$. Indeed, a splitting of any cycle  necessarily occurs within the same percolation cluster.
\item $M_{t}$, a  merging of two cycles within the same cluster: $N_{t} = N_{t-1}-1$ and $\widetilde N_{t} = \widetilde N_{t-1}$.
\item $\widetilde M_{t}$, a merging of two cycles in distinct clusters: $N_{t} = N_{t-1}-1$ and $\widetilde N_{t} = \widetilde N_{t-1}-1$.
\end{itemize}
Also, let $I_t=S_t\cup M_t$ be the event where the endpoints of  the edge $e_t$ belong to the same cluster. Notice that $\widetilde M_{t} = I_{t}^{\rm c}$.
Obviously, the three events above are mutually disjoint and cover all outcomes,
\be
\label{E:SMM}
\Omega_{n,t} =S_{t} \cup M_{t} \cup \widetilde M_{t}.
\ee
Notice that 
\be
\label{E:N-N}
N_{t} - \widetilde N_{t} = \sum_{i=1}^{t} ({\mathds1}_{S_{i}} - {\mathds1}_{M_{i}}).
\ee
A key in the proof of Theorem~\ref{T:supercrit} is the isoperimetric inequality of the hypercube $Q_n$. Namely, for any set $A\subset Q_n$, the number $\abs{E(A)}$ of edges of $G_n$ whose both end-vertices are in $A$ is
 \be
 \label{E:isoperimetric}
 \abs{E(A)}\le \tfrac12\abs{A}\log\abs{A}.
 \ee
Here (and elsewere in this paper) $\log$ is always meant as the logarithm of  base $2$.
See \cite{BL} for the proof of the bound in this form. 
It implies a lower bound on the number $\abs{E(A|A^{\co})}$ of edges connecting $A$ with its complement $A^{\co}=Q_n\setminus A$, namely
 \be
\abs{E(A|A^{\co})}\ge \abs{A}(n-\log\abs{A}).
\ee
We are not referring to this inequality in this article, but we found it useful in discussions.

Theorem~\ref{T:supercrit} would follow from the following lemma once its assumption is proven.

\begin{lemma}
\label{L:main}
Assume that $\bbP_{n}({S_{t}}) \geq \lambda$ with $\lambda\in(0,1)$. Then
\[
\bbE_{n} \Bigl( \frac{\abs{V_{t}(N^{a})}}N \Bigr) \geq \frac{{\lambda} - a}{1-a}
\]
{ for any $a\in(0,\lambda)$.}
\end{lemma}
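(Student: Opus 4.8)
The plan is to estimate the single-step conditional probability of a splitting and feed it into the isoperimetric inequality \eqref{E:isoperimetric}. First I would condition on $\mathcal{F}_{n,t-1}$. Since the edges are chosen independently and uniformly, $e_t$ is uniform on $E_n$ and independent of $\mathcal{F}_{n,t-1}$, and by the trichotomy of the events $S_t$, $M_t$, $\widetilde{M}_t$ recalled above, $S_t$ is exactly the event that the two endpoints of $e_t$ lie in a common cycle of $\sigma_{t-1}$. Denoting by $E(C)$ the set of edges of $G_n$ having both endpoints in a cycle $C$ of $\sigma_{t-1}$, these sets are disjoint and each relevant edge is counted once, so
\[
\bbP_n(S_t \mid \mathcal{F}_{n,t-1}) \;=\; \frac{1}{\abs{E_n}} \sum_{C} \abs{E(C)} .
\]

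Next I would apply $\abs{E(C)} \le \tfrac12 \abs{C}\log\abs{C}$ from \eqref{E:isoperimetric} and split the sum over cycles according to whether $\abs{C}\le N^a$ or $\abs{C} > N^a$. On the short cycles $\log\abs{C} \le a\log N = an$, while on the long ones $\log\abs{C}\le\log N = n$ (recall $\log$ is base $2$ and $N = 2^n$). Using $\sum_C \abs{C} = N$ and $\sum_{\abs{C}>N^a}\abs{C} = \abs{V_{t-1}(N^a)}$, this gives
\[
\sum_C \abs{E(C)} \;\le\; \tfrac{n}{2}\bigl( aN + (1-a)\abs{V_{t-1}(N^a)} \bigr),
\]
and dividing by $\abs{E_n} = Nn/2$ yields $\bbP_n(S_t\mid\mathcal{F}_{n,t-1}) \le a + (1-a)\abs{V_{t-1}(N^a)}/N$. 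Taking expectations and using $\bbP_n(S_t)\ge\lambda$ gives $\lambda \le a + (1-a)\,\bbE_n\bigl(\abs{V_{t-1}(N^a)}/N\bigr)$, whence, since $a<1$,
\[
\bbE_n\Bigl(\frac{\abs{V_{t-1}(N^a)}}{N}\Bigr) \;\ge\; \frac{\lambda - a}{1-a},
\]
which is the asserted bound; the hypothesis $a<\lambda$ is precisely what makes the right-hand side nonnegative.

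I do not anticipate a genuine obstacle: once the cycle/cluster dictionary recalled above and the isoperimetric inequality are in hand, the proof is a short computation. The one place needing a little care is the short/long dichotomy, whose content is that on cycles of length at most $N^a$ the isoperimetric factor $\tfrac{\log\abs{C}}{n}$ is automatically at most $a$, so short cycles contribute at most $a$ to $\bbP_n(S_t)$ however numerous they are, and any surplus of $\bbP_n(S_t)$ beyond $a$ is forced to sit on vertices lying in cycles longer than $N^a$. (The statement is phrased with $V_t$ rather than $V_{t-1}$; since $S_t$ refers to the cycles of $\sigma_{t-1}$, the permutation just before the $t$-th interchange, this amounts to a harmless one-step relabelling, immaterial for the windowed average in Theorem~\ref{T:supercrit}.)
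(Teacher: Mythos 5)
Your proof is correct and follows essentially the same route as the paper: condition on the cycle structure at time $t-1$, bound the conditional splitting probability by the isoperimetric inequality \eqref{E:isoperimetric}, split the cycle sum at length $N^{a}$, and take expectations. Your remark about $V_{t-1}$ versus $V_{t}$ is well taken---the paper's own proof sums over $\caC_{t-1}$ and silently writes $\abs{V_{t}(N^{a})}$---and, as you note, this one-step shift is immaterial for the averaged statement used in Theorem~\ref{T:supercrit}.
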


\begin{proof}
Let $\caC_{t-1}$ denote the set of cycles at time $t-1$. Since the total number of edges is $Nn/2$, and at most $\frac12 \sum_{C \in \caC_{t-1}} |C|  \log |C|$ edges cause a splitting, we have
\be
\bbP_{n}(S_{t}|\caC_{t-1}) \leq \frac1{Nn} \sum_{C \in \caC_{t-1}} |C| \log |C|.
\ee
It follows that 
\be
\begin{split}
\lambda &\leq \bbP_{n}(S_{t}) = \bbE_{n} ( \bbP_{n}(S_{t}|\caC_{t-1})) \\
&\leq \frac1{Nn} \bbE_{n} \Bigl( \sum_{C\in\caC_{t-1}} |C| \log |C| \Bigr) \\
&= \frac1{Nn} \bbE_{n} \Bigl( \sum_{C\in\caC_{t-1} : |C| \leq N^{a}} |C| \log |C| \Bigr) + \frac1{Nn} \bbE_{n} \Bigl( \sum_{C\in\caC_{t-1} : |C| > N^{a}} |C| \log |C| \Bigr) \\
&\leq \frac aN \bbE_{n} \Bigl( \sum_{C\in\caC_{t-1} : |C| \leq N^{a}} |C| \Bigr) + \frac1N \bbE_{n} \Bigl( \sum_{C\in\caC_{t-1} : |C| > N^{a}} |C| \Bigr).
\end{split}
\end{equation}
Using  $\sum_{C\in\caC_{t-1}} |C|=N$ and $\sum_{C\in\caC_{t-1}: |C| > N^{a}} |C| = |V_{t}(N^{a})|$, we get the lemma.
\end{proof}

What remains to be done is to establish {a} lower bound on the probability for an edge to connect {vertices within a  cycle and thus splitting it. 
We will get it by combining  lower bounds on the probability $\bbP_{n}(I_{t})$   for an edge to connect vertices within one cluster and on the rate $\bbP_{n}(S_{t})/\bbP_{n}(I_{t})$ for  those actually connecting vertices within a  cycle}.

As it turns out,  we can verify the {latter lower bound} only in a mean sense, averaging over an interval $[{T}, T+L]$ where ${T}$ is large. The ratio $L/T$ can be chosen to vanish but not too fast. We will use the following corollary, whose proof is  essentially a verbatim repetition of the proof above.

\begin{corollary}
\label{C:averaging}
Assume that for some $T,L\in\mathbb N$,  and $\lambda\in(0,1)$, we have
\[
\frac1L \sum_{t = T+1}^{T+L} \bbP_{n}(S_{t}) \geq  \lambda.
 \]
Then
\[
\frac1L \sum_{t = T+1}^{T+L} \bbE_{n} \Bigl( \frac{|V_{t}(N^{a})|}N \Bigr) \geq 
\frac{\lambda - a}{1-a}
\]
 for any $a\in(0,\lambda)$.
\end{corollary}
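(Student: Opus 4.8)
The plan is to mimic the proof of Lemma~\ref{L:main} verbatim, but carry the averaging over $t\in\{T+1,\dots,T+L\}$ through every line. Concretely, for each such $t$ let $\caC_{t-1}$ denote the set of cycles at time $t-1$. Exactly as in the lemma, since at most $\tfrac12\sum_{C\in\caC_{t-1}}|C|\log|C|$ of the $Nn/2$ edges produce a splitting, we have
\be
\bbP_{n}(S_{t}\mid\caC_{t-1})\le\frac1{Nn}\sum_{C\in\caC_{t-1}}|C|\log|C|,
\ee
and taking expectations and splitting the sum at $|C|=N^{a}$ gives
\be
\bbP_{n}(S_{t})\le\frac aN\,\bbE_{n}\Bigl(\sum_{C\in\caC_{t-1}:|C|\le N^{a}}|C|\Bigr)+\frac1N\,\bbE_{n}\bigl(|V_{t}(N^{a})|\bigr)\le a+\frac1N\bbE_{n}\bigl(|V_{t}(N^{a})|\bigr),
\ee
where the last step uses $\sum_{C\in\caC_{t-1}}|C|=N$ and $\sum_{C\in\caC_{t-1}:|C|>N^{a}}|C|=|V_{t}(N^{a})|$.

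Next I would average this inequality over $t=T+1,\dots,T+L$. Using the hypothesis $\frac1L\sum_{t}\bbP_{n}(S_{t})\ge\lambda$, this yields
\be
\lambda\le a+\frac1L\sum_{t=T+1}^{T+L}\frac1N\bbE_{n}\bigl(|V_{t}(N^{a})|\bigr),
\ee
so that $\frac1L\sum_{t}\bbE_{n}(|V_{t}(N^{a})|/N)\ge\lambda-a$. Finally, to sharpen $\lambda-a$ to $(\lambda-a)/(1-a)$, I would at the splitting step bound the small-cycle contribution more carefully: $\sum_{C:|C|\le N^{a}}|C|\le N-|V_{t}(N^{a})|$, giving $\bbP_{n}(S_{t})\le \frac aN(N-\bbE_{n}|V_{t}(N^{a})|)+\frac1N\bbE_{n}|V_{t}(N^{a})|=a+\frac{1-a}{N}\bbE_{n}|V_{t}(N^{a})|$; averaging over $t$ and rearranging then gives exactly $\frac1L\sum_{t}\bbE_{n}(|V_{t}(N^{a})|/N)\ge\frac{\lambda-a}{1-a}$, which is non-negative precisely when $a<\lambda$.

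There is essentially no obstacle here: the only subtlety is that the per-time bound $\bbP_n(S_t)\le a+\frac{1-a}{N}\bbE_n|V_t(N^a)|$ need not hold for every individual $t$ under the averaged hypothesis, which is why one must average \emph{before} invoking the hypothesis rather than after — but since this bound holds deterministically for every $t$ regardless of the hypothesis, averaging it is harmless, and the averaged hypothesis then feeds in cleanly. This is exactly what the paper means by "essentially a verbatim repetition of the proof above."
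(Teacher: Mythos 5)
Your proposal is correct and is essentially the paper's own argument: the paper proves the corollary by repeating the proof of Lemma~\ref{L:main} with the averaging over $t\in\{T+1,\dots,T+L\}$ carried through, which is exactly what you do, including the step $\sum_{C:|C|\le N^{a}}|C|=N-|V_{t}(N^{a})|$ that produces the $(\lambda-a)/(1-a)$ bound. Your remark that the deterministic per-$t$ inequality must be averaged before invoking the averaged hypothesis is precisely the (minor) point that makes the ``verbatim repetition'' work.
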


\subsection{Lower bound on the probability of $I_t$}
\label{S:It}

Here we show that, if the time is large enough, there is a positive probability that the vertices of a random edge belong to the same cluster.  
Equivalently, we need an upper bound on the probability  of the event ${\widetilde M}_{t}=I_t^{\co}$ that two clusters are merging. The first lemma applies to $c>\frac12$; the second lemma is restricted to $c>1$ but it gives an explicit bound. Let $\tilde V_{t}$ denote the largest percolation cluster after $t$ random transpositions.

\begin{lemma}
\label{lem AKS}
Assume that $\bbE_{n}(|\tilde V_{t}|) > c_{0}N$ for some constant $c_{0}>0$. Then there exists $c'>0$ such that
\[
\bbP_{n}(I_{t}) > c' c_{0} (1-o(1)).
\]
\end{lemma}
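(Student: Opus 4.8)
The plan is to estimate $\bbP_n(I_t)$ from below by conditioning on the percolation configuration at time $t$, and observing that $I_t$ occurs whenever the new edge $e_t$ happens to land inside a single cluster. Writing $\Ccal_t$ for the (random) partition of $Q_n$ into percolation clusters, we have
\be
\bbP_n(I_t \mid \Ccal_t) \;=\; \frac{2}{Nn} \sum_{C \in \Ccal_t} \abs{E(C)},
\ee
where $\abs{E(C)}$ is the number of edges of $G_n$ with both endpoints in $C$. The point is that a single large cluster contributes a lot of internal edges: if $\abs{C}= \beta N$ then, because $C$ is a connected subgraph of the hypercube on $\beta N$ vertices, it contains at least $\beta N - 1$ tree edges, but in fact one expects substantially more, of order $\beta N \cdot n$ — i.e.\ a positive fraction of the $Nn/2$ edges when $\beta$ is bounded below. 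So the heart of the argument is a lower bound of the form $\abs{E(C)} \geq c'' \abs{C}\, n$ valid for the giant cluster with high probability, after which
\be
\bbP_n(I_t) \;\geq\; \frac{2}{Nn}\, \bbE_n\bigl(\abs{E(\tilde V_t)}\bigr) \;\geq\; \frac{2 c''}{N}\, \bbE_n\bigl(\abs{\tilde V_t}\bigr) \;\geq\; 2 c'' c_0,
\ee
giving the claim with $c' = 2c''$.

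First I would make precise the statement that the giant percolation cluster on $Q_n$ at the relevant density $p = 2c/n$ with $c>\tfrac12$ has internal edge-count comparable to $\abs{\tilde V_t}\, n$. The natural route is to invoke the existing results on supercritical percolation on the hypercube (the work cited as \cite{AKS}, and the precise giant-cluster statements in the references already used, e.g.\ \cite{BKL}): for $c>\tfrac12$ the largest cluster has $\abs{\tilde V_t} \geq c_0 N$ in expectation (this is exactly the hypothesis of the lemma), and moreover it contains a linear-in-$N$, and in fact order-$Nn$, number of internal edges. Equivalently one can count open edges directly: conditionally on the vertex set of the giant cluster, each of its internal graph-edges is open independently with probability $\sim p = 2c/n$, so the expected number of open internal edges is $\sim p \abs{E(\tilde V_t)}$; but even the number of \emph{closed} internal graph-edges is $\Theta(\abs{\tilde V_t} n)$ because a set of $\beta N$ vertices in $Q_n$ spans $\Theta(\beta N n)$ edges of $G_n$ once $\beta$ is bounded away from $0$ — this is the complementary side of the isoperimetric inequality \eqref{E:isoperimetric} (a set that is not too small cannot have too few internal edges). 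I would combine these to extract the bound $\bbE_n(\abs{E(\tilde V_t)}) \geq c'' \bbE_n(\abs{\tilde V_t})\, n$.

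I expect the main obstacle to be precisely this geometric input: passing from "the giant cluster has $\Theta(N)$ vertices" to "the giant cluster has $\Theta(Nn)$ internal edges (of $G_n$)". A connected set of $\beta N$ vertices only guarantees $\beta N - 1$ spanning edges, which would give $\bbP_n(I_t) \gtrsim c_0/n \to 0$ — useless. So one genuinely needs that the giant cluster is "fat", not a thin tree, i.e.\ that its induced subgraph has average degree bounded below (a positive multiple of $n$). This should follow from the fact that a subset $A \subset Q_n$ with $\abs A \geq c_0 N$ has $\abs{E(A)} \geq \tfrac12 \abs A \log\abs A \cdot (1 - o(1)) \geq \tfrac12 c_0 N (n + \log c_0)$ whenever $A$ is, say, a Hamming ball or close to extremal — but the giant cluster is not extremal for isoperimetry. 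The clean fix is to note $\abs{E(A)} \geq \abs A (n - \log\abs A)/\ldots$ is the wrong direction; instead I would use the elementary deterministic bound that for \emph{any} $A$ with $\abs A = 2^{n-k}$ one has $\abs{E(A)} \geq \tfrac12 \abs A (n - k)$ when... — in fact the safest is simply: $\sum_{C}\abs{E(C)} = \abs{E_n} - \abs{E(\Ccal_t^{\mathrm{cut}})}$ where $\abs{E(\Ccal_t^{\mathrm{cut}})}$ is the number of edges between distinct clusters, and if there is a giant cluster occupying a fraction $c_0$ of vertices then $\widetilde N_t$ is small and the number of inter-cluster edges is bounded by $\sum_{C \neq \tilde V_t}\abs C \cdot n \leq (1-c_0)Nn$, so $\sum_C \abs{E(C)} \geq \tfrac12 Nn - (1-c_0)Nn$ — which is positive only for $c_0 > \tfrac12$, not in general. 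This shows the bound as I have sketched it needs the correct quantitative percolation statement from \cite{AKS,BKL}; I would isolate that as the one external ingredient and present the rest as above. The $(1-o(1))$ in the statement absorbs the error terms in the giant-cluster estimate and in $1-(1-\tfrac2{Nn})^t \sim 2c/n$.
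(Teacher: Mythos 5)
Your overall reduction is the same as the paper's: bound $\bbP_n(I_t)$ below by the number of hypercube edges lying inside a single cluster (in fact inside the largest cluster $\tilde V_t$) divided by $Nn/2$, so that everything hinges on showing $\bbE_n\abs{E(\tilde V_t)}\gtrsim n\,\bbE_n\abs{\tilde V_t}$. You also correctly identify this ``fatness'' of the giant cluster as the crux. The problem is that you do not actually supply it, and your candidate substitutes cannot work. There is no deterministic lower bound on $\abs{E(A)}$ for a set with $\abs A = c_0 N$: if $c_0\le\frac12$, a subset of one parity class of $Q_n$ spans no edges at all, and connectivity of a cluster only gives $\abs A-1$ edges, i.e.\ $\bbP_n(I_t)\gtrsim c_0/n\to0$, as you note yourself. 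The isoperimetric inequality \eqref{E:isoperimetric} points the wrong way, and your cut-edge count $\sum_C\abs{E(C)}\ge\frac12Nn-(1-c_0)Nn$ is vacuous unless $c_0>\frac12$, which is not guaranteed (for $c\in(\frac12,1]$ the constant $c_0$ may be small). So the needed input is genuinely probabilistic, and the hypothesis $\bbE_n\abs{\tilde V_t}>c_0N$ alone, plus hypercube geometry, does not yield it.

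The paper closes this gap by quoting a specific result, Remark 2 of \cite{AKS}: there exist $\eps>0$ and $c'>0$ such that with probability $1-o(1)$ the set $W_t$ of vertices having at least $c'n$ neighbours in $\tilde V_t$ satisfies $\abs{W_t}>N-N^{1-\eps}$. Every vertex of $\tilde V_t\cap W_t$ then contributes at least $c'n$ hypercube edges with both endpoints in $\tilde V_t$, so $\bbP_n(I_t)\ge\frac{c'}{N}\bbE_n\abs{\tilde V_t\cap W_t}\ge\frac{c'}{N}\bigl(\bbE_n\abs{\tilde V_t}-\bbE_n\abs{W_t^{\co}}\bigr)\ge c'c_0(1-o(1))$. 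Your proposal gestures at ``the correct quantitative percolation statement from \cite{AKS,BKL}'' without identifying it, so the one step that makes the lemma true is missing. Two smaller points: the conditioning in your first display should be on the cluster structure at time $t-1$ (the event $I_t$ concerns the configuration before $e_t$ is added), and the claim that, given the vertex set of the giant cluster, its internal hypercube edges are independently open with probability $\sim p$ is not correct as stated (and is in any case irrelevant, since $I_t$ counts hypercube edges inside clusters, not open edges).
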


\begin{proof}
It is based on \cite[Remark 2]{AKS}, which states that there exist $\eps>0$ and $c'>0$ such that
\be
\label{W}
\bbP_{n}(|W_{t}| > N - N^{1-\eps}) = 1 - o(1),
\ee
where $W_{t}$ is the set of vertices which have at least $c'n$ neighbours in $\tilde V_{t}$. By only considering edges within the largest cluster, we obtain
\be
\bbP_{n}(I_{t}) \geq \frac{c'}N \bbE_{n}(|\tilde V_{t} \cap W_{t}|).
\ee
Using $|\tilde V_{t} \cap W_{t}| \geq |\tilde V_{t}| - |W_{t}^{\rm c}|$, the lemma follows.
\end{proof}

\noindent
{\it Remark:} In \cite{AKS} the authors use their Remark 2 as an indication that for $t>\frac{1}2 N$ the second largest cluster is of size $o(N)$. Actually, this has been proven in
\cite[Theorem 31]{BKL} where it was shown that the size of the second largest cluster is of the order at most $n/(2c-1)^2$ (we adhere here to our notation with critical $c=1/2$).  The claim \eqref{W} thus follows immediately, combining  \cite[Lemma 3]{AKS} --- which states the same for the set of all vertices that have at least $c'n$ neighbours in clusters of size at least $n^2$, with \cite[Theorem 31]{BKL} --- which implies that this set actually coincides with $W_t$.
(Notice that in both \cite{AKS} and \cite[Theorem 31]{BKL}, the results are actually formulated  for  percolation clusters on the hypercube  with probability of an edge being occupied chosen as
$p=2c/n$.)

We state and prove the next lemma for the hypercube, but it actually holds for any finite graph.

\begin{lemma}
\label{lem slow merge}
Let $t\in \mathbb{N}$ and $\delta\in (0,1)$. Then
\[
\bbP_{n}({\widetilde M}_{t}) \leq N/t+t^{-(1-\delta)/2}+\exp\rbr{-t^{\delta}/2}.
\]
\end{lemma}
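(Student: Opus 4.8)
The event $\widetilde M_t$ occurs precisely when the endpoints of the fresh edge $e_t$ lie in two distinct percolation clusters of the configuration $(e_1,\dots,e_{t-1})$. The plan is to control this probability by splitting on whether the percolation configuration at time $t-1$ is "typical" or not, where typicality refers to the number of clusters. Concretely, let $\widetilde N_{t-1}$ denote the number of clusters at time $t-1$ and fix a threshold, say $\widetilde N_{t-1}\le K$. On the event $\{\widetilde N_{t-1}\le K\}$ one estimates $\bbP_n(\widetilde M_t\mid \mathcal F_{n,t-1})$ directly: the number of edges of $G_n$ joining two distinct clusters is at most $\frac{n}{2}(\widetilde N_{t-1}-1)$ in the worst case — actually it is more efficient to bound it by $\frac{Nn}{2}$ minus the number of "wasted" edges, but the clean bound is that merging edges number at most $\tfrac{n}{2}(K-1)$... hmm, that is too crude. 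Let me instead bound it via the cluster sizes.

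A cleaner route: write $\bbP_n(\widetilde M_t\mid \mathcal F_{n,t-1}) = \frac{1}{|E_n|}\#\{e : e \text{ joins two clusters}\}$. If the clusters are $A_1,\dots,A_m$ with $\sum|A_j| = N$, then the number of such edges is at most $\tfrac12\sum_{j}|A_j|\cdot n - \sum_j|E(A_j)|$; but more to the point, at most $\tfrac12(N - \max_j |A_j|)\cdot n$ edges can touch a vertex outside the largest cluster, hence at most this many are inter-cluster edges. So $\bbP_n(\widetilde M_t\mid \mathcal F_{n,t-1}) \le (N - |\tilde V_{t-1}|)/N$. Thus the whole problem reduces to showing $\bbE_n(N - |\tilde V_{t-1}|)/N \le N/t + t^{-(1-\delta)/2} + \exp(-t^\delta/2)$, i.e. the largest cluster already covers most vertices. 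This is where the exploration/martingale argument enters: run a breadth-first exploration of the cluster of a uniformly random vertex in the percolation model with edge-probability $p = 1-(1-\tfrac{2}{Nn})^{t-1}\ge 1-\e{-2(t-1)/(Nn)}$. The expected number of vertices that fail to be in a cluster of size $\ge t^\delta$ is small by a standard subexponential-tail estimate on the exploration (a dominated branching/coupling argument), giving the $\exp(-t^\delta/2)$ term, while among vertices in clusters of size $\ge t^\delta$ a second-moment or sprinkling argument forces them all into a single giant cluster except for a set of expected size $N t^{-(1-\delta)/2}$, and the $N/t$ term absorbs the $N^{1-\eps}$-type correction and the discrepancy between $t$ and $t-1$.

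The key steps in order are: (i) the deterministic reduction $\bbP_n(\widetilde M_t\mid\mathcal F_{n,t-1})\le (N-|\tilde V_{t-1}|)/N$; (ii) convert the random interchange time parameter into an i.i.d.\ bond percolation model with $p\gtrsim 2(t-1)/(Nn)$ and observe the relevant regime is $pn \gtrsim 1$, i.e.\ supercritical-or-borderline, which is exactly the range where a unique giant cluster is expected once $t$ is moderately large; (iii) a sharp-concentration estimate showing $\bbE_n(N - |\tilde V_{t-1}|) \le N(N/t + t^{-(1-\delta)/2} + \exp(-t^\delta/2))$, proved by exploring the cluster of a random vertex and using an exponential martingale bound to get the $\exp(-t^\delta/2)$ contribution from "small" clusters and a connectivity/sprinkling argument for the "medium" ones. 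Since the statement is claimed for \emph{all} $t\in\bbN$ and \emph{every} finite graph, one must be careful that the bound is trivial (the right-hand side exceeds $1$) when $t\lesssim N$, which removes any worry about the subcritical regime; the content is purely in the range $t\gg N$.

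The main obstacle I expect is step (iii): obtaining the quantitative tails $t^{-(1-\delta)/2}$ and $\exp(-t^\delta/2)$ uniformly over all finite graphs, with no geometric input, using only the edge count $|E_n|$ and the vertex count. The graph-independence forces a soft argument — essentially the observation that in $t$ steps of the interchange process the permutation $\sigma_t$ itself has few cycles, or equivalently that the number of distinct clusters decreases rapidly on average — so I would likely prove step (iii) not through percolation geometry at all but through a direct bound on $\bbE_n(\widetilde N_{t-1})$ via a supermartingale: $\widetilde N_t - \widetilde N_{t-1} = -\1_{\widetilde M_t}$, and $\bbP_n(\widetilde M_t\mid\mathcal F_{n,t-1})$ is, by step (i), at least some function of $\widetilde N_{t-1}$ that is bounded below when there are many clusters; telescoping/optional-stopping then yields $\bbE_n(\widetilde N_{t-1}) \le N^2/t + (\text{lower order})$, and $N - |\tilde V_{t-1}| \le$ (number of vertices outside the largest cluster) is controlled by $\widetilde N_{t-1}$ together with a crude bound on how many vertices sit in small clusters, which is where the $\exp(-t^\delta/2)$ and $t^{-(1-\delta)/2}$ split-by-cluster-size bookkeeping reappears.
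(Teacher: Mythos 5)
There is a genuine gap, and the approach diverges from what the statement can support. Your central reduction (i), $\bbP_n(\widetilde M_t\mid\mathcal F_{n,t-1})\le C\,(N-|\tilde V_{t-1}|)/N$, throws away too much: it converts the lemma into the claim that the largest cluster covers all but an $\bigl(N/t+t^{-(1-\delta)/2}+\e{-t^{\delta}/2}\bigr)$-fraction of the vertices. That claim is false in the generality in which the lemma holds (the paper notes it is valid for \emph{any} finite graph): on a cycle graph with $t$ of order a few times $N$ the largest cluster has size $O(\log N)$, yet the lemma's bound is still true there; for a disconnected graph $N-|\tilde V_{t-1}|$ stays of order $N$ forever while $\bbP_n(\widetilde M_t)\to0$. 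Even restricted to $Q_n$, obtaining such giant-component coverage with precisely those error terms would require a serious percolation analysis (your ``exploration/exponential martingale/sprinkling'' steps are only sketched and are much harder than the lemma itself). Your fallback route via a supermartingale for $\widetilde N_t$ has the same defect: the needed lower bound on the merging probability as a function of the cluster count does not exist without geometric input (and fails outright for disconnected graphs), it would not produce an $N^2/t$ decay, and you never explain how a bound on $\bbE_n(\widetilde N_{t-1})$ would be converted back into a bound on $\bbP_n(\widetilde M_t)$.

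The missing idea, which makes the proof short and completely graph-free, is a monotonicity observation about the \emph{conditional} merging probability itself. Set $p_t=\bbE_n(\1_{\widetilde M_t}\mid\mathcal F_{t-1})$. Since clusters only grow, a pair of vertices lying in one cluster stays in one cluster, so $p_t\ge p_{t+1}$ pointwise; and deterministically $\sum_{i=1}^{t}\1_{\widetilde M_i}\le N$ because each merge decreases the cluster count, which starts at $N$. Then $X_t=\sum_{i=1}^{t}(\1_{\widetilde M_i}-p_i)$ is a martingale with increments bounded by $1$, and Azuma gives $\bbP_n\bigl(X_t\le -t^{(1+\delta)/2}\bigr)\le\e{-t^{\delta}/2}$. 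On the complementary event, $N\ge\sum_{i\le t}\1_{\widetilde M_i}\ge \sum_{i\le t}p_i-t^{(1+\delta)/2}\ge tp_t-t^{(1+\delta)/2}$ by monotonicity, so $p_t\le N/t+t^{-(1-\delta)/2}$ except with probability $\e{-t^{\delta}/2}$; since $\bbP_n(\widetilde M_t)=\bbE_n(p_t)\le 1$ on the bad event, the stated bound follows. The monotonicity step (your proposal contains nothing playing its role) is exactly what converts the crude global count ``at most $N$ merges ever'' into a bound on the merge probability at the single time $t$, with no percolation geometry at all.
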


\begin{proof}
We recall that $\cbr{\mathcal{F}_{t}}_{t\geq0}$ denotes the filtration associated with the process of adding edges (i.e. $\mathcal{F}_{t}$ contains
information about the first $t$ edges), and define the random variables
\be
p_{t} = \bbE_{n}({\1}_{{\widetilde M}_{t}}|\mathcal{F}_{t-1}).
\ee
Simple but crucial observations are that for  any  $t \in \bbN$, we have 
\begin{equation}
1\geq p_{t}\geq p_{t+1}\ \text{ and }\quad\sum_{i=1}^{t}\1_{{\widetilde M}_{i}}\leq N.
\label{eq:observations}
\end{equation}
Indeed, adding an edge decreases the chance of next merging and the
total number of mergings is smaller than the size of the graph. Let us define the process
$\cbr{X_{t}}_{t\geq0}$ by 
\be
X_{t} = \sum_{i=1}^{t}\left(\1_{{\widetilde M}_{i}}-p_{i}\right).
\ee
One verifies that it is a martingale and $|X_{t+1}-X_{t}|\leq1$.
By the Azuma inequality we have $\bbP_{n} \left(X_{t}\leq-t^{(1+\delta)/2}\right)\leq\exp\rbr{-t^{1+\delta}/(2t)}=\exp\rbr{-t^{\delta}/2}.$
Hence,
\be
\begin{split}
1-\exp\rbr{-t^{\delta}/2} &\leq \bbP_{n} \Bigl(\sum_{i=1}^{t}\bigl(\1_{{\widetilde M}_{i}}-p_{i}\bigr)\geq-t^{(1+\delta)/2}\Bigr) \\
&= \bbP_{n}\Bigl(\sum_{i=1}^{t}\1_{{\widetilde M}_{i}}\geq-t^{(1+\delta)/2} + \sum_{i=1}^{t}p_{i}\Bigr).
\end{split}
\ee
Using (\ref{eq:observations}) we estimate further,
\be
\begin{split}
1-\exp\rbr{-t^{\delta}/2} &\leq \bbP_{n} \Bigl(\sum_{i=1}^{t}\1_{{\widetilde M}_{i}}\geq-t^{(1+\delta)/2}+tp_{t}\Bigr) \\
&\leq \bbP_{n} \bigl(N+t^{(1+\delta)/2}\geq tp_{t}\bigr).
\end{split}
\ee
In other words $\bbP_{n}(p_{t}\geq N/t+t^{-(1-\delta)/2})\leq\exp\rbr{-t^{\delta}/2}.$
Finally, 
\be
\begin{split}
\bbP_{n}({\widetilde M}_{t}) &= \bbE_{n}(\1_{{\widetilde M}_{t}}) \\
&= \bbE_{n}(p_{t}) \\
&\leq \bbP_{n} \bigl( p_{t} \geq N/t+t^{-(1-\delta)/2} \bigr) + N/t+t^{-(1-\delta)/2} \\
&\leq \exp\bigl(-t^{{\delta}/2}\bigr)+N/t+t^{-(1-\delta)/2}.\qedhere
\end{split}
\ee
\end{proof}

\subsection{Lower bound on the rate $ \bbP_{n}(S_t)/\bbP_{n}(I_t)$}
\label{S:St/It}

Let us begin with a bound on the probability of unfavourable splittings that result in short cycles. We define  the event $S_{t}^{\leq k}\subset S_t$  as those splittings that result in  producing a cycle of length less than or equal to $k$ (or in two such cycles). 

\begin{lemma}
\label{L:splitprob}
For any $n,t,k \in \bbN$ we have  
\[
\bbP_{n} \bigl( S_{t}^{\leq k} \bigr) \leq \frac{2\log(2k)}{n}.
\]
\end{lemma}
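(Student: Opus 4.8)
The plan is to bound the number of edges $e_t\in E_n$ whose insertion at time $t$ produces a short cycle, and then divide by $\abs{E_n}=Nn/2$. Conditioning on the configuration $\caC_{t-1}$ of cycles at time $t-1$, a splitting event $S_t^{\leq k}$ happens only when the two endvertices of $e_t$ lie in a common cycle $C\in\caC_{t-1}$ and, moreover, the arc of $C$ between those two endvertices has length at most $k$ in one direction (this is the standard ``cut a cycle into two arcs'' picture: inserting a transposition $\tau_{xy}$ with $x,y$ in the same cycle splits that cycle into two pieces whose lengths are the two arc-lengths along the cycle between $x$ and $y$, and these sum to $\abs{C}$). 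So I first record this combinatorial fact: for a fixed cycle $C$, the number of ordered pairs $(x,y)$ of vertices of $C$ at cyclic distance $\le k$ (in at least one direction) is at most $\abs{C}\cdot 2k$, hence the number of \emph{edges} $\{x,y\}\in E_n$ internal to $C$ whose insertion produces a piece of length $\le k$ is at most $\abs{C}\cdot(2k)$ — actually I only need the cruder bound that it is at most $\min\{\tfrac12\abs{C}\log\abs{C},\ 2k\abs{C}\}$, combining the isoperimetric inequality~\eqref{E:isoperimetric} with this arc count.

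Next I would sum over cycles. We get
\[
\bbP_{n}\bigl(S_t^{\le k}\mid\caC_{t-1}\bigr)\ \le\ \frac{2}{Nn}\sum_{C\in\caC_{t-1}}\min\Bigl\{\tfrac12\abs{C}\log\abs{C},\,2k\abs{C}\Bigr\}.
\]
Split the sum according to whether $\abs{C}\le 2k$ or $\abs{C}>2k$. For the small cycles, $\tfrac12\abs{C}\log\abs{C}\le\tfrac12\abs{C}\log(2k)$; for the large cycles, use the arc bound $2k\abs{C}$, but note that once $\abs{C}>2k$ we have $2k<\tfrac12\abs{C}$, so that term is at most... hmm, this direction does not obviously give $\log(2k)$. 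The clean way is: for every cycle, whichever of the two arcs has length $\le k$ contributes at most the number of internal edges of a path-like set of size $\le 2k$ sitting inside $C$; more efficiently, bound the contribution of each cycle by $\tfrac12\abs{C\cap(\text{short part})}\log(2k)$-type quantity. The sharpest and simplest route: the total number of ordered pairs $(x,y)$, $x\ne y$, lying in a common cycle at cyclic distance exactly $j$ is exactly $N$ for each $j\ge 1$ (each vertex has a unique $\sigma_{t-1}^{\pm j}$-image, provided its cycle is long enough, and at most $N$ in general). Hence the number of edges of $G_n$ realizing cyclic distance $\le k$ in some cycle is at most the number of such ordered pairs that happen to be an edge; but rather than chase that, I bound: number of internal edges producing a $\le k$ piece $\le\sum_{C}\min\{\tfrac12\abs C\log\abs C,\ 2k\abs C\}\cdot$, and then observe $\min\{\tfrac12 x\log x,2kx\}\le x\log(2k)$ for all $x\ge 1$ (indeed if $x\le 2k$ then $\tfrac12\log x\le\log(2k)$; if $x>2k$ then $2k\le 2k\le x\log(2k)/x\cdot x$... check: need $2k\le\log(2k)$, false).

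So the last inequality is the delicate point, and I expect \textbf{the main obstacle is exactly getting the $\log(2k)$ rather than $2k$ out of the large-cycle terms}. The fix is to not use $2k\abs C$ for large cycles at all, but instead a second application of the isoperimetric inequality to the short arc: a short piece is a connected subset $A\subset Q_n$ with $\abs A\le 2k$, hence it contains at most $\tfrac12\abs A\log\abs A\le \tfrac12\abs A\log(2k)\le k\log(2k)$ internal edges, and only internal edges of the short arc can create the $\le k$ cycle. Summing over at most $N/1$ ... no: there can be many cycles. Summing the bound $k\log(2k)$ over the at most $N/(2k)$-ish ... this also overshoots. The genuinely correct bookkeeping is per-vertex: for each ordered pair $(x,\sigma^{j}x)$ with $1\le j\le k$ that happens to be an edge, charge it to $x$; each $x$ is charged at most $2k$ times and there are $N$ choices of $x$, giving $\le 2kN$ ordered pairs hence $\le kN$ edges, which returns the weak bound $2k/n$, not $2\log(2k)/n$. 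I therefore expect the intended argument replaces the crude ``at most $k$ steps along the cycle'' by: the short cycle produced is a \emph{connected} vertex set of size $\le k$ \emph{in $Q_n$} that is a union of cycle-arcs, and then counts such connected sets via the hypercube's structure (each connected set of size $m$ in $Q_n$ touches at most $mn$ edges but sits in at most $\binom{n}{\le\log m}$-many ``directions'', etc.), so that the count of \emph{edges} $e_t$ that can do it is $O(N\log(2k))$. I will carry out: (i) the arc decomposition identity for $S_t$; (ii) the reduction to counting edges lying inside a connected $\le k$ set; (iii) an isoperimetric-style count giving the factor $\log(2k)$; (iv) division by $\abs{E_n}=Nn/2$ and the constant-chasing to reach $2\log(2k)/n$, absorbing lower-order terms into the displayed constant.
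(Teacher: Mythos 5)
Your reduction is the right one: on $S_t^{\le k}$ the edge $e_t$ must join two vertices lying within a block of at most $2k$ consecutive vertices of some cycle of $\sigma_{t-1}$, and the isoperimetric inequality \eqref{E:isoperimetric} bounds the number of hypercube edges inside any such block $A$ by $\tfrac12\abs{A}\log\abs{A}\le\tfrac12\abs{A}\log(2k)$ (connectivity in $Q_n$ is irrelevant here --- \eqref{E:isoperimetric} holds for an arbitrary subset, so your worry about the arc being ``connected'' and the later idea of counting connected sets by directions are both unnecessary). But you stop exactly at the step that matters, and neither fallback you sketch closes it: per-vertex charging gives only $2k/n$, as you yourself note, and the ``directions'' count is not carried out. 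The missing ingredient is a bookkeeping device, not a new estimate: cover each cycle of length $\ell>2k$ by windows of $2k$ consecutive cycle-vertices shifted by $k$ (for cycles of length $\le 2k$ take the whole support). Then (a) any two cycle-vertices at cyclic distance at most $k$ lie in a common window, so on $S_t^{\le k}$ the edge $e_t$ is internal to some window $A_i$; (b) each vertex lies in at most two windows, so $\sum_i\abs{A_i}\le 2N$. Hence the number of edges whose insertion can produce a piece of length $\le k$ is at most $\sum_i\tfrac12\abs{A_i}\log(2k)\le N\log(2k)$, and dividing by $\abs{E_n}=Nn/2$ gives exactly $2\log(2k)/n$, with no lower-order terms to absorb.

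In fact you were closer than you realized: the variant you dismissed as overshooting --- charging roughly $k\log(2k)$ internal edges per window over roughly $N/k$ windows --- yields $N\log(2k)$, i.e.\ precisely the right total; what it lacked was the justification that order $N/k$ windows suffice to catch every short splitting, and that is exactly what the overlap-by-$k$ covering provides. This covering (the family $\{A_i\}$ with properties (a)--(c)) is the paper's proof; once you add it, the remaining steps of your outline (isoperimetry applied per window, then division by $Nn/2$) go through verbatim.
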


\begin{proof}
Given an arbitrary configuration $\bse\in\Omega_{n,t}$ yielding a collection of cycles covering $Q_n$, we can find a family of sets  $\{A_{i}\}, A_i\subset Q_n$ such that
\begin{enumerate}
\item[(a)] $|A_{i}| \leq 2k$;
\item[(b)] $\sum_{i} |A_{i}| \leq 2N$;
\item[(c)] ${\mathds1}_{S_{t}^{\le k}} \leq \sum_{i} {\mathds1}_{e_{t} \in E(A_{i})}$, where $e_t \in E_{n}$ is the random edge at time $t$.
\end{enumerate}
Indeed, to each cycle of length less than or equal to $2k$, we define $A_{i}$ to be its support. For a cycle 
of length $\ell>2k$, we label its vertices consecutively by natural numbers (starting from an arbitrary one) identifying the labels $j, \ell+j, 2\ell+j,\dots$, $j=1,\dots, \ell$. Denoting $m= \lfloor \ell/k \rfloor$, notice that  $mk\le\ell<(m+1)k$ and $\ell+k<(m+2)k<2\ell$.
We cover the cycle
by the following collection of intervals
$$(1,\dots, 2k), (k+1,\dots, 3k), \dots, ((m-1)k+1,\dots,(m+1)k), (m k+1,\dots,\ell + k)$$
if $mk<\ell$. In the case $mk=\ell$, the last interval is skipped and the collection ends with $((m-1)k+1,\dots,(m+1)k)$.
 Clearly, the length of all intervals is either $2k$ or, for the last one, $\ell+k-mk< 2k$ thus (a) holds. Further, (b) is implied by the fact that any site of the cycle is covered exactly twice. 
 Moreover, any pair $j_1<j_2$ such that $j_1 \in(1,\dots \ell)$ and $j_2-j_1<k$ is necessarily contained in at least one of above intervals.
 Namely,  if $j_1\in (rk+1, \dots, (r+1)k)$ (resp. $j_1\in (m k+1,\dots,\ell)$ for the last interval if $mk<\ell$),
 then $j_1,j_2\in (rk+1, \dots, (r+2)k)$ (resp. $j_1,j_2\in (m k+1,\dots,\ell+k)$).
As a result, (c) is verified and thus
we get 
\be
\label{bound involving A_i}
\bbP_{n}(S_{t}^{\leq k}) \leq \sum_{i} \bbP_{n}(e_t\in E(A_{i})).
\ee
The number of edges  $E(A_{i})$ induced by $A_{i}$ is, according to the isoperimetric inequality \eqref{E:isoperimetric}, bounded by 
$\frac12 |A_{i}| \log |A_{i}|$. Given that the number of all edges in $Q_n$ is $\frac{Nn}2$, we get $\bbP_{n}(e_t\in E(A_{i}))\le \frac{|A_{i}| \log |A_{i}|}{Nn}\le |A_{i}|\frac{ \log (2k)}{Nn}$.  Using also that $\sum_{i}|A_{i}|\le 2N$, we  get the claimed bound
\be
\bbP_{n}(S_{t}^{\leq k}) \leq \sum_{i}|A_{i}|\frac{ \log (2k)}{Nn}\le 2N  \frac{ \log (2k)}{Nn}= \frac{2\log(2k)}{n}.
\ee
\end{proof}

The  bound from Lemma~\ref{L:splitprob} can be used to show that the number of cycles $N_t$ does not depart too far from the number of clusters
${\widetilde N}_t$.
\begin{lemma}
\label{L:N-Nbound}
    There exists $n_{1}$ such that for $n \geq n_{\color{blue}{1}}$ and any $t \in \bbN$ we have 
\[
\bbE_{n}(N_{t}-\widetilde N_{t}) \leq t\frac{3 \log (4n)}{n}.
\]
\end{lemma}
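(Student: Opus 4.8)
The plan is to control $N_t - \widetilde N_t$ using the representation \eqref{E:N-N}, namely $N_{t} - \widetilde N_{t} = \sum_{i=1}^{t} ({\mathds1}_{S_{i}} - {\mathds1}_{M_{i}})$, and to bound the expectation by dropping the (nonnegative) contribution of the merging events $M_i$, which can only decrease $N_t - \widetilde N_t$. This gives $\bbE_{n}(N_t - \widetilde N_t) \le \sum_{i=1}^t \bbP_{n}(S_i)$, so it suffices to bound $\bbP_{n}(S_t)$ for each $t$ by roughly $3\log(4n)/n$. The key point is that a ``typical'' splitting either creates a short cycle or, if it splits a long cycle into two long pieces, it can only happen within a cluster whose size is at most the size of the largest cluster, which (at the times of interest) is small in a comparative sense. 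But we want a statement valid for \emph{all} $t\in\bbN$, so we cannot invoke the supercritical structure; instead we combine Lemma~\ref{L:splitprob} with the isoperimetric bound directly.

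Concretely, I would split $S_t = S_t^{\le k} \cup (S_t \setminus S_t^{\le k})$ for a well-chosen threshold $k$. On $S_t \setminus S_t^{\le k}$ the splitting produces two cycles \emph{both} of length $>k$, so the cycle being split has length $> 2k$; more importantly, such a splitting occurs within a single percolation cluster, and every edge causing \emph{any} splitting lies in some $E(C)$ with $C$ a cycle, hence $\bbP_{n}(S_t \mid \caC_{t-1}) \le \frac{1}{Nn}\sum_{C}|C|\log|C|$ as in the proof of Lemma~\ref{L:main}. Bounding $\log|C| \le \log N = n$ crudely gives only $\bbP_{n}(S_t)\le 1$, which is useless; the efficient route is to note that a splitting into two long pieces requires the random edge to join two vertices of the \emph{same} cycle that are ``far apart'' along the cycle, and to cover each long cycle by overlapping intervals of length $2k$ exactly as in Lemma~\ref{L:splitprob}, but now using intervals of length $2k$ to catch \emph{all} chords, not just short ones. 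Choosing $k = 2n$ (so that $\log(2k) = \log(4n)$) and applying Lemma~\ref{L:splitprob} with this $k$ yields $\bbP_{n}(S_t^{\le 2n}) \le \frac{2\log(4n)}{n}$, and then a parallel covering argument for the complementary event — or simply a second application of the same interval construction — bounds $\bbP_{n}(S_t \setminus S_t^{\le 2n})$ by a further $\frac{\log(4n)}{n}$, giving $\bbP_{n}(S_t) \le \frac{3\log(4n)}{n}$ for $n$ large enough. Summing over $i = 1,\dots,t$ gives the claim.

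The main obstacle is handling the ``good'' splittings $S_t \setminus S_t^{\le k}$: unlike $S_t^{\le k}$, here the two resulting cycles are large, so one cannot bound their supports by $2k$. The trick is that the relevant quantity is still the number of edges \emph{internal to a single cycle}, and the isoperimetric inequality \eqref{E:isoperimetric} bounds $|E(C)|$ by $\tfrac12|C|\log|C|$ for each cycle $C$; summing over all cycles and using $\sum_C |C| = N$ together with $\log|C|\le n$ would give $\bbP_{n}(S_t)\le \tfrac12$, far too weak. The resolution — and the reason the threshold $k=2n$ appears — is that we only need a bound on the \emph{sum over short cycles plus a correction}: for cycles of length $\le 2n$ the isoperimetric bound gives $|E(C)|\le \tfrac12|C|\log(2n)$, while long cycles are covered by length-$2k$ intervals so that the chord-edge count per long cycle is at most $(|C|/k)\cdot\tfrac12(2k)\log(2k) = |C|\log(2k)$ up to the overlap factor $2$. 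One must be careful that the covering catches \emph{every} internal edge of the cycle, not only short chords; since the graph is the hypercube and any edge joins vertices differing in one coordinate, while the two endpoints of a long-range chord can be arbitrarily far apart along the cycle, this requires the intervals to be shifted so that consecutive ones overlap in a block of length $k$ and then observing that a chord skipped by all intervals would have to span a gap larger than $k$ at both ends simultaneously — impossible for the given covering. Assembling these pieces and tracking the constant is the only real work; everything else is a direct combination of \eqref{E:N-N}, Lemma~\ref{L:splitprob}, and the isoperimetric inequality.
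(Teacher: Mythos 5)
Your overall reduction is the wrong one, and the step that is supposed to make it work is false. From \eqref{E:N-N} you drop the merging indicators and aim at a per-step bound $\bbP_{n}(S_{t})\leq \frac{3\log(4n)}{n}$ for \emph{all} $t$. No such bound can hold: for $t$ of order $cN$ with $c>\tfrac12$, the whole point of the paper (Lemmas \ref{lem AKS}, \ref{lem slow merge} and \ref{L:many splits}, feeding into Corollary \ref{C:averaging}) is that the averaged splitting probability is bounded \emph{below} by a constant $\eta(c)>0$; if your bound were true, averaging it would give $\frac1L\sum_t\bbP_n(S_t)\to 0$ and Theorem \ref{T:supercrit} could not be proved this way (indeed a giant cycle $C$ carries up to $\tfrac12|C|\log|C|\approx\tfrac12 Nn$ internal edges, i.e.\ a constant fraction of all edges, so splittings are frequent). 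Concretely, the failure is in your treatment of $S_t\setminus S_t^{\le k}$: the interval covering of Lemma~\ref{L:splitprob} catches exactly those chords whose endpoints are at cycle-distance less than $k$ (these are the ones producing a short piece); a chord joining two vertices that are, say, nearly antipodal along a cycle of length $\ell\gg k$ lies in none of the sets $A_i$, so ``a second application of the same interval construction'' bounds nothing, and your closing claim that a skipped chord is ``impossible for the given covering'' is incorrect. After dropping $-\1_{M_i}$ there is no mechanism left to absorb these frequent long-into-long splittings, and the inequality $\bbE_n(N_t-\widetilde N_t)\le\sum_i\bbP_n(S_i)$, while valid, is hopelessly lossy (its right side is $\Theta(t)$ for large $t$).

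The paper's proof keeps the cancellation implicitly by a different decomposition: write $N_t=N_t^{\le 2n}+N_t^{>2n}$ and observe that the excess of \emph{short} cycles over clusters satisfies $N_t^{\le 2n}-\widetilde N_t\le\sum_{i\le t}\1_{S_i^{\le 2n}}$, since each surplus short cycle must have been produced by a splitting creating a piece of length $\le 2n$; only these rare events are estimated via Lemma~\ref{L:splitprob} with $k=2n$, giving $t\cdot\frac{2\log(4n)}{n}$. The long-into-long splittings you tried to bound are never controlled at all: long cycles are few simply because they are long, $N_t^{>2n}\le N/(2n)\le t\frac{\log(2n)}{n}$ once $t>\frac{N}{2\log(2n)}$, while for smaller $t$ one uses the subcritical estimate \eqref{E:EVlarge} (with $N_t^{>2n}\le|V_t(2n)|/(2n)$) to get $\bbE_n(N_t^{>2n})\le 1/n$. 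That counting idea---bounding the \emph{number} of long cycles rather than the \emph{rate} at which they are created---is the ingredient missing from your argument, and it is what produces the $t\,\frac{3\log(4n)}{n}$ bound.
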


\begin{proof}
Let $N^{\le 2n}$ (resp. $N^{> 2n}$) denote the number of cycles shorter {or equal} to $2n$ (resp. longer than $2n$). Obviously  $N_{t} = N_{t}^{\le 2 n} + N_{t}^{> 2 n}$ and thus
\be
\begin{split}
\bbE_{n}(N_{t} - \widetilde N_{t}) &= \bbE_{n}(N_{t}^{\leq2n} - \widetilde N_{t}) + \bbE_{n}(N_{t}^{>2n}) \\
&\leq \bbE_{n} \Bigl( \sum_{i=1}^{t} {\mathds1}_{S_{i}^{\leq 2 n}} \Bigr) + \bbE_{n}(N_{t}^{>2n}).
\end{split}
\ee
To bound $\bbE_{n}(N_{t}^{>2n})$, we simply use that $N_{t}^{>2n} \leq N/(2n)\le t \frac{\log(2n)}{n}$ once $t> \frac{N}{2\log(2n)}$. 
On the other hand,  for $t\le \frac{N}{2\log(2n)}$ we get $\bbE_{n}(N_{t}^{>2n}) \leq 1/n$  once $n$ is sufficiently large.
Indeed, observe that $N_{t}^{> 2 n}\le \frac{|V_t(2n)|}{2n}$. Hence,  we can use \eqref{E:EVlarge} with $c=1/\log(2n)$ allowing to choose $\kappa=2 >\frac{2\ln 2}{(1-2c)^2}$. 
 
The result then follows from Lemma \ref{L:splitprob}.
\end{proof}

\begin{lemma}
\label{L:many splits}
For any $T,L \in \mathbb{N}$ and any $n \geq n_{1}$  (with $n_{1}$ the constant from Lemma \ref{L:N-Nbound}), we have
\[
\sum_{t= T+1}^{T+L} \bbP_{n}(S_{t}) \geq \tfrac12 \sum_{t= T+1}^{T+L} \bbP_n(I_{t}) - \tfrac32  T \frac{\log (4n)}{n}.
\]
\end{lemma}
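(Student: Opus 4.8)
The plan is to reduce everything to the identity \eqref{E:N-N} together with Lemma~\ref{L:N-Nbound}. First, since $S_{t}$ and $M_{t}$ are disjoint and $I_{t}=S_{t}\cup M_{t}$, we have $\bbP_{n}(I_{t})=\bbP_{n}(S_{t})+\bbP_{n}(M_{t})$. Hence the asserted inequality is equivalent to
\[
\sum_{t=T+1}^{T+L}\bigl(\bbP_{n}(S_{t})-\bbP_{n}(M_{t})\bigr)\ \geq\ -\,3\,T\,\frac{\log(4n)}{n}.
\]
So the whole task is to control the signed sum of splitting minus merging probabilities over the window $[T+1,T+L]$ from below.

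Second, I would take expectations in \eqref{E:N-N}, which gives $\bbE_{n}(N_{t}-\widetilde N_{t})=\sum_{i=1}^{t}\bigl(\bbP_{n}(S_{i})-\bbP_{n}(M_{i})\bigr)$ for every $t$. Consequently the sum over the window telescopes,
\[
\sum_{t=T+1}^{T+L}\bigl(\bbP_{n}(S_{t})-\bbP_{n}(M_{t})\bigr)=\bbE_{n}(N_{T+L}-\widetilde N_{T+L})-\bbE_{n}(N_{T}-\widetilde N_{T}).
\]
Now I bound the two terms on the right separately. For the first, recall $N_{t}\geq\widetilde N_{t}$ pointwise (a splitting of a cycle always happens inside one cluster, a merging across clusters reduces both counts), so $\bbE_{n}(N_{T+L}-\widetilde N_{T+L})\geq0$. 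For the second, Lemma~\ref{L:N-Nbound} applies provided $n\geq n_{1}$ and yields $\bbE_{n}(N_{T}-\widetilde N_{T})\leq 3\,T\,\frac{\log(4n)}{n}$. Combining the two estimates gives exactly the displayed lower bound $-3T\log(4n)/n$, and dividing by $2$ after reinserting $\bbP_{n}(I_{t})=\bbP_{n}(S_{t})+\bbP_{n}(M_{t})$ produces the constant $\tfrac32$ in the statement.

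There is no genuinely hard step here: the content has already been done in Lemma~\ref{L:N-Nbound} (which itself rests on the isoperimetric bound \eqref{E:isoperimetric} via Lemma~\ref{L:splitprob}), and in the elementary but crucial observation $N_{t}\geq\widetilde N_{t}$. The only point requiring care is bookkeeping of constants and of the factor $2$: one must track that the ``$3$'' of Lemma~\ref{L:N-Nbound} becomes ``$\tfrac32$'' here precisely because $I_{t}$ splits as $S_{t}\cup M_{t}$ and one is effectively estimating $\tfrac12\bbP_{n}(S_{t})-\tfrac12\bbP_{n}(M_{t})$.
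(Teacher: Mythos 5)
Your proof is correct and follows essentially the same route as the paper: both rest on the identity \eqref{E:N-N}, the pointwise bound $N_{t}\geq\widetilde N_{t}$ at time $T+L$, and Lemma~\ref{L:N-Nbound} at time $T$. The only difference is presentational — you rearrange the claimed inequality first and then telescope in expectation, while the paper writes the indicator-level decomposition $\mathds1_{S_t}=\tfrac12(\mathds1_{S_t}-\mathds1_{M_t})+\tfrac12\mathds1_{I_t}$ and takes expectations afterwards.
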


\begin{proof}
We have
\be
\begin{split}
\sum_{t= T+1}^{T+L} {\mathds1}_{S_{t}} &= \tfrac12 \sum_{t= T+1}^{T+L} \bigl( {\mathds1}_{S_{t}} - {\mathds1}_{M_{t}} \bigr) + \tfrac12 \sum_{t= T+1}^{T+L}  \bigl( {\mathds1}_{S_{t}} +{\mathds1}_{M_{t}} \bigr) \\
&= \tfrac12 \bigl( N_{T+L} - \widetilde N_{T+L} - N_{{T}} + \widetilde N_{{T}} \bigr) + \tfrac12 \sum_{t= T+1}^{T+L}  {\mathds1}_{I_{t}}.
\end{split}
\ee
 The claim follows by taking expectations,  using that $N_{T+L}-\widetilde N_{T+L}\geq0$, and applying Lemma~\ref{L:N-Nbound} for the expectation of $N_{ T}-\widetilde N_{ T}$. 
\end{proof}

\subsection{Proof of Theorem \ref{T:supercrit}}
\label{sec proof thm}

We check the condition of Corollary~\ref{C:averaging} with $\lambda=\eta(c)$, where
\be
\eta(c) = \begin{cases} \frac12 c' c_{0} & \text{if } c \in (\frac12, 1], \\ \frac12 (1 - \frac1c) & \text{if } c>1. \end{cases}
\ee
By Lemma~\ref{L:many splits} and Lemmas \ref{lem AKS} with \ref{lem slow merge}, we have
\be
\frac1L \sum_{t=T+1}^{T+L} \bbP_{n}(S_{t}) \geq \tfrac12 \frac1L \sum_{t=T+1}^{T+L} \bbP_{n}(I_{t}) -\tfrac32  \frac{T}{L} \frac{\log (4n)}{n} \geq \eta(c) - o(1)
\ee
once we choose $L=\Delta_n T$ with $\Delta_{n} n/\log n \to \infty$. 
Theorem \ref{T:supercrit} now follows from Corollary~\ref{C:averaging}, since $\frac{\eta(c)-a}{1-a} > \eta(c)-a$; this actually allows to neglect the corrections $o(1)$.
\hfill$\square$

\medskip
\noindent
{\bf Acknowledgments:} We are grateful to Nathana\"el Berestycki for clarifying to us that his result \cite{Ber} applies to arbitrary graphs of diverging degrees. We also thank the referee for bringing our attention to Remark 2 in \cite{AKS}; this allowed us to extend Theorem \ref{T:supercrit} from $c>1$ to $c>\frac12$.
D.U.\ thanks the Newton Institute for a useful visit during the program ``Random Geometry'' in the spring 2015.  The research of R.K. was supported by the grant GA\v CR P201/12/2613. The research of P.M was supported by the grant UMO-2012/07/B/ST1/03417.

{
\renewcommand{\refname}{\small References}
\bibliographystyle{symposium}

}

\end{document}